\title{Stochastic Asymptotic Stabilizers for Deterministic Input-Affine Systems based on Stochastic Control Lyapunov Functions\thanks{This work was partially supported by Grant-in-Aid for Young Scientists (B) of KAKENHI (22760320) and Grant-in-Aid for Scientific Research (B) of KAKENHI (22360167). E-mail: yunishi@yamaguchi-u.ac.jp}}
\author{Y\^uki Nishimura\footnotemark[2]
\and Kanya Tanaka\footnotemark[2]
\and Yuji Wakasa\footnotemark[2]
\and Yuh Yamashita\footnotemark[5]}
\begin{document}
\maketitle

\renewcommand{\thefootnote}{\fnsymbol{footnote}}

\footnotetext[2]{Yamaguchi University}
\footnotetext[5]{Hokkaido University}

\renewcommand{\thefootnote}{\arabic{footnote}}

\hyphenation{Lyapunov}
\hyphenation{Markov}
\hyphenation{Alcaraz}
\hyphenation{Kushner}
\hyphenation{Krasovskii}
\hyphenation{Schultz}
\hyphenation{Zubov}
\hyphenation{Vannelli}
\hyphenation{Vidyasagar}
\hyphenation{Bucy}
\hyphenation{Mao}
\hyphenation{Hamiltonian}
\hyphenation{Lagrangian}
\hyphenation{Hess}
\hyphenation{Euclid}
\hyphenation{Poisson}
\hyphenation{Wiener}
\hyphenation{Nishimura}
\hyphenation{Yamashita}
\hyphenation{Gaussian}
\hyphenation{Brockett}

\maketitle

\begin{abstract}
In this paper, a stochastic asymptotic stabilization method is proposed for deterministic input-affine control systems, which are randomized by including Gaussian white noises in control inputs. The sufficient condition is derived for the diffucion coefficients so that there exist stochastic control Lyapunov functions for the systems. To illustrate the usefulness of the sufficient condition, the authors propose the stochastic continuous feedback law, which makes the origin of the Brockett integrator become globally asymptotically stable in probability.
\end{abstract}


%

\pagestyle{myheadings}
\thispagestyle{plain}
\markboth{Y. Nishimura, K. Tanaka, Y. Wakasa, and Y. Yamashita}{Stochastic Asymptotic Stabilizers for Deterministic Input Affine Systems}

\section{Introduction}

This paper proposes design concepts of the diffusion coefficients and the stochastic control Lyapunov functions for stochastic stabilization problems of general deterministic input-affine control systems.

In deterministic control problems, such as nonholonomic systems, there exist systems--- not locally asymptotically stabilizable using any continuous time-invariant feedback law ---which are controllable \cite{coron,zabczyk}. For such systems, previous workers proposed different control laws: the time-varying feedback laws \cite{fujimoto,sordalen,tian}, the discontinuous feedback laws \cite{astolfi,bloch}, variable constraint control laws \cite{tamura,yang}, and time-state control laws \cite{sampei1994}. For the deterministic systems, on the other hand, the authors of this paper propose the stochastic feedback laws via the stochastic control Lyapunov functions \cite{nishimura2009cdc}. The aim of the authors' previous work has been to design continuous stochastic feedback laws; however, the continuity of the proposed controllers was not investigated. Moreover, the randomizing problems were not well-considered; i.e., Wong-Zakai theorem \cite{wong} was not considered when the deterministic systems were randomized. 

In this paper, the general deterministic input-affine control systems are randomized using the Wong-Zakai theorem. Then, sufficient conditions for diffusion coefficients are derived so that there exist stochastic control Lyapunov functions for the systems. Further, the stochastic continuous feedback law is proposed as it enables the nonholonomic system become globally asymptotically stable in probability.

This paper is organized as follows. In \ssref{subsec:idea}, the motivation for this reserach is described using Brockett integrator \cite{brockett}, a typical nonholonomic system. In \sref{sec:preliminary}, the basic results of stochastic stabilities, stochastic stabilizabilities, and randomization problems are summarized. In \sref{sec:main}, the main results of this paper are presented. In \ssref{subsec:general}, general deterministic input-affine control systems are randomized via the Wong-Zakai theorems, besides considering the design strategies of the diffusion coefficients and the stochastic control Lyapunov functions. In \ssref{subsec:brockett}, the validity of the strategies is confirmed by obtaining a Sontag-type \cite{sontag} stochastic controller for the Brockett integrator. The sufficient condition for the proposed controller to be continuous is also obtained. Moreover, it is proven that the origin of the resulting closed-loop system is globally asymptotically stable in probability. \sref{sec:simulation} shows the numerical simulation of the Brockett integrator with the proposed controller. \sref{sec:conclusion} concludes the paper.

In this paper, $\R^n$ denotes an $n$-dimensional Euclidean space. For a vector $x \in \R^n$ and a mappings $g: \R^n \rightarrow \R^n \times \R^r$ and $V: \R^n \rightarrow \R$, the Lie derivative of $V(x)$ is represented by
\begin{align}
\lie{g}{V}(x) := \pfrac{V(x)}{x} g(x).
\end{align}
The conditional probability of some event $A$, under the condition $B$, is written $\pr \{ A |B\}$. A one-dimensional standard Wiener process is represented as $w(t) \in \R$. For $\sigma: \R^n \rightarrow \R$, the differential forms of the Stratonovich and Ito integrals in $w(t)$ are denoted by $\sigma(x) \circ dw$ and $\sigma(x) dw$, respectively.

\begin{remark}
In this paper, one-dimensional Wiener processes are used for randomizing, because the aim of this paper is to design continuous feedback laws. If multi-dimensional Wiener processes $W(t) \in \R^d$ with $d \ge 2$ are applied, the Ito mappings $\Phi:[0,\infty) \times \R^d \rightarrow \R^n: (W,t) \mapsto x$ of the randomized systems $x(t)=\Phi(W,t)$ become discontinuous with probability one \cite{ikeda}.
\end{remark}

\subsection{Motivation}\label{subsec:idea}

In control problems for deterministic nonlinear systems, such as nonholonomic systems, there exist systems which are not locally asymptotically stabilizable by using any continuous state feedback law, although they are controllable \cite{coron}. The Brockett integrator
\begin{align}\label{eq:brockett}
\hspace{-0.5cm}\dot{x} = g(x) u_c = \begin{bmatrix}
g_1(x) \\g_2(x)
\end{bmatrix}u_c = \begin{bmatrix}
b_1 & 0 \\ 0 & b_2 \\ b_3 x_2 & -b_4 x_1
\end{bmatrix} \begin{bmatrix}
u_{c_1} \\ u_{c_2}
\end{bmatrix}
\end{align}
is one of typical nonholonomic systems\footnote{The original Brockett integrator \cite{brockett} satisfies $b_1=b_2=b_3=b_4=1$. Further, if $b_1=b_2=b_3=1$ and $b_4=0$, the system \eqref{eq:brockett} is said to be a chained system \cite{nijimeijer}.}, where $b_1,b_2,b_3,b_4 \in \R$ satisfy $b_1\neq 0$, $b_2 \neq 0$, and $b_1b_4+b_2b_3 \neq 0$. This system is controllable because the rank of a matrix
\begin{align}
&\begin{bmatrix}
g_1(x) & g_2(x) & \lie{g_1}{g_2}(x)-\lie{g_2}{g_1}(x)\end{bmatrix} = \begin{bmatrix}
b_1 & 0 & b_3 x_2 \\ 0 & b_2 & -b_4 x_1 \\ 0 & 0 & b_2 b_3 + b_1 b_4
\end{bmatrix}
\end{align}
is $3$ for all $x \in \R^3$ \cite{nijimeijer}; however, the system does not have any continuous feedback stabilizer, because it is a driftless affine nonholonomic system \cite{coron}. 

Let the stabilization problem of the Brockett integrator \eqref{eq:brockett} be considered from the viewpoint of the control Lyapunov theory \cite{sontag}. A simple positive definite function $V_1(x):=x_1^2+x_2^2+x_3^2$ is not a control Lyapunov function for \eqref{eq:brockett}, because
\begin{align}
\dot{V}_1(x) = \lie{g}{V_1}(x) u = 0
\end{align}
is derived for $x \in M:=\{x \in \R^3 |x_1=x_2=0\}$. Hence, one has to consider a new positive definite function $V_2: \R^n \rightarrow \R$, which is concave down in $M$; i.e.,
\begin{align}\label{eq:sclf-bro}
&V_2(x) := 2 x_3^2 - \frac12 X(x)(1+x_3^2)+2 \left|\frac{X(x)}{2}\right|^{1+\frac{x_3^2}{2}},
\end{align}
where
\begin{align}
&X(x) := x_1^2+x_2^2.
\end{align}
Fig.~\ref{fig:sclf} implies that if the initial state is not in $M \backslash \{0\}$, and if the feedback control law is designed so that $\dot{V}<0$ except $M$, then the trajectories of the system \eqref{eq:brockett} converge to the origin. However, the origin of the resulting closed-loop system is not locally asymptotically stable, because any neighborhood of the origin contains the nonempty subset of $M \backslash \{0 \}$. On the other hand, the Hessian of $V_2(x)$ in $M$ is calculated as
\begin{align}
\hspace{-0.5cm}\left[ \pfrac{}{x}\left[ \pfrac{V_2}{x} \right]^T \right](x)= \begin{bmatrix}
-(1+x_3^2) & 0 & 0 \\ 0 & -(1+x_3^2) & 0 \\ 0 & 0 & 4
\end{bmatrix},
\end{align}
which has negative-valued eigenvalues. This implies that the trajectory starting from $M$ possibly converges to the origin, if the Hessian has a role in the flow of $V_2(x)$.

In this paper, to use the Hessian effectively, feedback control laws involving the Wiener processes are considerd. In other words, the aim of this paper is to solve the stochastic stabilization problems of the deterministic input-affine control systems by using stochastic control Lyapunov functions. 

\begin{remark}
The foregoing approach is analogous to the globally asymptotically stabilization problems for systems with non-contractible state space \cite{tsuzuki}. However, the basic idea of this paper is different from that of \cite{tsuzuki} because, this paper proposes stochastic continuous feedback laws and \cite{tsuzuki} deterministic discontinuous feedback laws. \eot
\end{remark}

\begin{figure}[t]
\begin{center}
\includegraphics[width=0.4\textwidth,keepaspectratio=true]{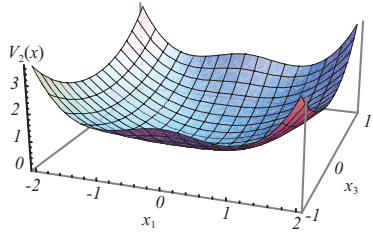}
\end{center}
\caption{Positive Definite Function $V_2(x)$ in $x_2=0$}
\label{fig:sclf}
\end{figure}

\section{Preliminary Discussion}\label{sec:preliminary}

In this section, the basic results of stochastic stabilities, stochastic stabilizabilities, and randomization problems are summarized and discussed.

\subsection{Stochastic Control Lyapunov Theory}

In this paper, the theories of Lyapunov stability and stabilizability are used. In this subsection, the previous results of Hasminskii \cite{hasminskii} and Florchinger \cite{florchinger1995,florchinger1997feedback,florchinger1997global} are described.

Let a stochastic system
\begin{align}\label{eq:sys-sto}
dx(t) = f(x(t)) dt + \sigma(x(t)) dw(t)
\end{align}
and a control stochastic system
\begin{align}\label{eq:sys-sto-ctrl}
dx(t) = \{ f(x(t))+g(x(t))u \} dt + \sigma(x(t)) dw(t)
\end{align}
be considered, where the initial state $x(0)=x_0$ is given in $\R^n$; $u: \R^n \rightarrow \R^m$ is a control input; $f: \R^n \rightarrow \R^n$, $\sigma: \R^n \rightarrow \R^n \times \R$, and $g: \R^n \rightarrow \R^n \times \R^m$ are Lipshitz functionals satisfying $f(0)=0$ and $\sigma(0)=0$. Also, it is assumed that a non-negative constant $K$ satysfying
\begin{align}
|f(x)| + |g(x)| + |h(x)| \le K (1 +|x|)
\end{align}
exists for any $x \in \R^n$.

\begin{definition}[stability in probability \cite{florchinger1995,hasminskii}]
The equilibrium $x(t)\equiv 0$ of the system \eqref{eq:sys-sto} is stable in probability if 
\begin{align}
\lim_{x_s \rightarrow 0} \pr \left\{ \sup_{s \le t} |x(t)|>\epsilon \Big| x(s)=x_s \right\} = 0
\end{align}
for any $s \ge 0$ and any $\epsilon > 0$. \eod
\end{definition}
\begin{definition}[local asymptotic stability in probability \cite{florchinger1995,hasminskii}]\label{def:lasip}
The equilibrium $x(t)\equiv 0$ of the system \eqref{eq:sys-sto} is locally asymptotically stable in probability if $x(t) \equiv 0$ is stable in probability and
\begin{align}
\lim_{x_s \rightarrow 0} \pr \left\{ \lim_{t \rightarrow +\infty} |x(t)|=0 \Big| x(s)=x_s \right\} = 1
\end{align}
for any $s \ge 0$. \eod
\end{definition}
\begin{definition}[global asymptotic stability in probability \cite{florchinger1997global,hasminskii}]
The equilibrium $x(t)\equiv 0$ of the system \eqref{eq:sys-sto} is locally asymptotically stable in probability if $x(t) \equiv 0$ is stable in probability and
\begin{align}
\pr \left\{ \lim_{t \rightarrow +\infty} |x(t)|=0 \Big| x(s)=x_s \right\} = 1
\end{align}
for any $s \ge 0$ and $x \in \R^n$. \eod
\end{definition}

\begin{definition}[local asymptotic stabilizability in probability \cite{florchinger1995}]
The equilibrium $x(t)\equiv 0$ of the system \eqref{eq:sys-sto-ctrl} is locally asymptotically stabilizable in probability if there exists a neighborhood $D \in \R^n$ of the origin and a function $k : D \rightarrow \R^n$ such that the solution of the closed-loop system
\begin{align}\label{eq:closed}
dx = (f(x)+g(x)k(x))dt + \sigma(x) dw
\end{align}
is uniquely defined in $x \in D$, and the equilibrium $x(t) \equiv 0$ of \eqref{eq:closed} is locally asymptotically stable in probability. \eod
\end{definition}

Let the {\it infinitesimal generators} be defined
\begin{align}
\mathcal{L}(\cdot) = &\lie{f}{(\cdot)}(x) + \frac12 \sigma(x)^T \left[ \pfrac{}{x}\left[\pfrac{(\cdot)}{x}\right]^T \right] \sigma(x)
\end{align}
for the system \eqref{eq:sys-sto} and 
\begin{align}
\mathcal{L}(\cdot) = &\lie{f}{(\cdot)}(x) + \lie{g}{(\cdot)}(x) u + \frac12 \sigma(x)^T \left[ \pfrac{}{x}\left[\pfrac{(\cdot)}{x}\right]^T \right] \sigma(x)
\end{align}
for the system \eqref{eq:sys-sto-ctrl}.

\begin{definition}[stochastic control Lyapunov function \cite{florchinger1995}]\label{def:sclf}
A function $V: D \subset \R^n \rightarrow \R$ is said to be a stochastic control Lyapunov function of the system \eqref{eq:sys-sto-ctrl} if $V(x)$ is twice differentiable in $x \in D$, proper in $D$, and $(\mathcal{L}V)(x)$ is negative definite in $x \in D \cap \{ \lie{g}{V}(x) = 0 \}$. \eod
\end{definition}

\begin{definition}[bounded control property \cite{florchinger1995}]\label{def:sbcp}
A stochastic control Lyapunov function $V: D \rightarrow \R$ is said to satisfy the bounded control property if there exists a function $d: D \rightarrow (0,\infty)$ such that $d$ is bounded on $D$ and for every $x \in D \backslash \{0\}$ there exists a control $u \in \R^m$ such that $||u|| < d(x)$ and $(\mathcal{L}V)(x) < 0$.
\end{definition}

\begin{definition}[small control property \cite{florchinger1995}]\label{def:sscp}
A stochastic control Lyapunov function $V: D \rightarrow \R$ is said to satisfy the small control property if $V(x)$ satisfies the bounded control property and $\lim_{x \rightarrow 0} d(x) = 0$. \eod
\end{definition}

Using the above definitions, the following theorems are obtained.

\begin{theorem}[Florchinger \cite{florchinger1997feedback}, Theorem 1.2]\label{the:florchinger}
$\quad$
\begin{enumerate}
\item There exists a stochastic control Lyapunov function $V(x)$ of the system \eqref{eq:sys-sto-ctrl} if and only if the system \eqref{eq:sys-sto-ctrl} is locally asymptotically stablilzable in probability by means of a feedback law $u=k(x)$ which is smooth in $D \backslash \{0\}$.
\item The stochastic control Lyapunov function $V(x)$ in 1 satisfies the bounded control property if and only if the feedback laws $k(x)$ in 1 satisfy $||k(x)||<d(x)$ for every $x \in D \backslash \{0\}$.
\item If the feedback law $k(x)$ in 2 is bounded in $D$, and if the stochastic control Lyapunov function $V(x)$ in 1 satisfies the small control property, then $\lim_{x \rightarrow 0} k(x) = 0$.
\end{enumerate}
\eot
\end{theorem}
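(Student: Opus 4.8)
The plan is to treat this as a stochastic analogue of the Artstein--Sontag theorem, with the infinitesimal generator $\mathcal{L}$ playing the role of the deterministic Lie derivative. I would prove part~1 by establishing the two implications separately, then deduce parts~2 and 3 from the explicit feedback constructed along the way.

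For the direction ``a stochastic control Lyapunov function exists $\Rightarrow$ a smooth stabilizer exists,'' I would decompose the generator along \eqref{eq:sys-sto-ctrl} into a control-free part and a control part,
\begin{align*}
(\mathcal{L}V)(x) = a(x) + \lie{g}{V}(x)\,u, \qquad a(x) := \lie{f}{V}(x) + \frac12 \sigma(x)^T \left[ \pfrac{}{x}\left[\pfrac{V}{x}\right]^T \right] \sigma(x),
\end{align*}
and set $b(x) := \lie{g}{V}(x)$. By Definition~\ref{def:sclf}, $a(x)<0$ on $\big(D \cap \{ b(x)=0 \}\big)\setminus\{0\}$, so Sontag's universal formula
\begin{align*}
k(x) = -\,b(x)^T\,\frac{a(x)+\sqrt{a(x)^2+|b(x)|^4}}{|b(x)|^2}\ \ (b(x)\neq 0),\qquad k(x)=0\ \ (b(x)=0)
\end{align*}
is well defined, makes $(\mathcal{L}V)(x)<0$ for all $x\in D\setminus\{0\}$, and is smooth on $D\setminus\{0\}$: since $a<0$ wherever $b=0$ away from the origin, the pair $(a(x),b(x))$ never meets the singular locus $\{b=0,\ a\ge 0\}$ on which the formula fails to be analytic. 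With $V$ now serving as a stochastic Lyapunov function for the closed loop \eqref{eq:closed}, Hasminskii's stochastic Lyapunov theorem yields local asymptotic stability in probability.

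For the converse direction, I would assume a stabilizer $k$ that is smooth on $D\setminus\{0\}$ and invoke a stochastic converse Lyapunov theorem (Hasminskii/Kushner) to obtain a proper, twice-differentiable $V$ for which $(\mathcal{L}V)(x)$ evaluated at $u=k(x)$ is negative definite on $D$. To confirm $V$ is a stochastic control Lyapunov function, note that on $D\cap\{\lie{g}{V}(x)=0\}$ the term $\lie{g}{V}(x)\,u$ vanishes for every $u$, so there $(\mathcal{L}V)(x)=a(x)$ coincides with the closed-loop value and is negative definite, which is exactly the requirement of Definition~\ref{def:sclf}. For part~2, the bounded control property (Definition~\ref{def:sbcp}) supplies a bounded $d$ and admissible controls with $\|u\|<d(x)$; I would verify that the Sontag feedback above inherits the bound $\|k(x)\|<d(x)$ from the explicit form of the formula, and conversely that a bounded stabilizer furnishes such a $d$. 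For part~3, the small control property (Definition~\ref{def:sscp}) adds $\lim_{x\to 0}d(x)=0$, and since $\|k(x)\|<d(x)$ this forces $\lim_{x\to 0}k(x)=0$, i.e.\ continuity of the feedback at the origin.

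The main obstacle is the converse direction: manufacturing a twice-differentiable, proper Lyapunov function from mere local asymptotic stability in probability requires the full stochastic converse-Lyapunov machinery, including the regularity and smoothing of a candidate built from Dynkin- or hitting-time estimates, which is far more delicate than the forward construction. A secondary technical point is the smoothness of Sontag's formula across the set where $\lie{g}{V}$ vanishes but $x\neq 0$; this is precisely what the stochastic control Lyapunov function property secures, by keeping $(a,b)$ off the singular locus.
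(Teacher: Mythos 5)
First, note that the paper offers no proof of this statement at all: Theorem~\ref{the:florchinger} is quoted verbatim from Florchinger \cite{florchinger1997feedback} as background material, so there is no in-paper argument to compare yours against. Judged on its own terms, your forward direction follows the standard (and, indeed, Florchinger's actual) route: writing $(\mathcal{L}V)(x)=a(x)+\lie{g}{V}(x)\,u$ and feeding the pair $(a,b)$ into Sontag's universal formula is correct; the closed-loop generator equals $-\sqrt{a(x)^2+|b(x)|^4}$ where $b(x)\neq 0$ and equals $a(x)<0$ where $b(x)=0$, and regularity away from the origin follows because the stochastic control Lyapunov function property keeps $(a(x),b(x))$ off the singular locus $\{b=0,\ a\ge 0\}$ (with the caveat that $V$ is only assumed twice differentiable, so $k$ is only as regular as $a$ and $b$, which falls short of the ``smooth'' claimed in the statement). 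Part 3 is fine once part 2 is in hand.

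Two genuine gaps remain. The converse implication of part 1 is not proved but merely outsourced: producing a proper, $C^2$ function $V$ with negative definite closed-loop generator from local asymptotic stability in probability is exactly the hard content of that direction, and you would need to name the specific stochastic converse Lyapunov theorem being invoked and check its hypotheses (in particular that the closed-loop coefficients, which are only smooth away from the origin, satisfy them); you acknowledge this obstacle but do not close it. Second, in part 2 the plain Sontag formula does \emph{not} automatically inherit the pointwise bound $||k(x)||<d(x)$ from the mere existence of \emph{some} admissible $u$ with $||u||<d(x)$ and $(\mathcal{L}V)(x)<0$: the bounded control property only yields $a(x)<|b(x)|\,d(x)$, from which the formula gives an estimate of order $|b(x)|$, not one below $d(x)$. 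Making part 2 rigorous requires the bounded-controls variant of the universal formula (Lin--Sontag type) or a rescaling of the control directions; as written, ``I would verify that the Sontag feedback above inherits the bound'' is an unsupported claim rather than a proof step.
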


\begin{theorem}[Hasminskii \cite{hasminskii}]\label{the:hasminskii}
The equilibrium $x(t) \equiv 0$ of \eqref{eq:sys-sto} is globally asymptotically stable in probability if there exists a function $V: \R^n \rightarrow \R$ which is twice continuous in $x \in \R^n$ and proper in $x \in \R^n$ such that $(\mathcal{L}V)(x)$ is negative definite in $x \in \R^n$. \eot
\end{theorem}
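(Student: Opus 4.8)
The plan is to proceed via It\^o's formula and the supermartingale property of $V$ along solution trajectories, following Hasminskii's classical method. First I would apply It\^o's formula to $V(x(t))$ along the solution of \eqref{eq:sys-sto}. Because $V$ is twice continuously differentiable, this yields
\begin{align}
dV(x(t)) = (\mathcal{L}V)(x(t))\, dt + \lie{\sigma}{V}(x(t))\, dw(t),
\end{align}
where the drift term is exactly the infinitesimal generator defined above and the last term is a local martingale. Since $(\mathcal{L}V)(x)$ is negative definite, hence nonpositive, the process $V(x(t))$ is a nonnegative supermartingale once the stochastic integral is localized by the exit times of balls to secure integrability; the growth bound $|f(x)|+|\sigma(x)| \le K(1+|x|)$ guarantees that solutions exist for all time and do not explode, so this localization is harmless.

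Second, I would establish stability in probability. Fix $\epsilon>0$ and let $\tau_\epsilon$ be the first exit time of $x(t)$ from the ball $\{|x|<\epsilon\}$. Writing $\ell_\epsilon := \inf_{|x|=\epsilon} V(x)>0$, which is strictly positive by positive definiteness of $V$, and applying the optional-stopping and supermartingale maximal inequalities gives
\begin{align}
\ell_\epsilon\, \pr\Big\{ \sup_{s\le t}|x(s)| > \epsilon \,\Big|\, x(0)=x_0 \Big\} \le \mathbb{E}[V(x(t\wedge \tau_\epsilon))] \le V(x_0).
\end{align}
Letting $t\to\infty$ and then $x_0\to 0$, continuity of $V$ with $V(0)=0$ forces the right-hand side to vanish, which is exactly stability in probability.

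Third, I would prove almost-sure convergence of the trajectories to the origin. The nonnegative supermartingale $V(x(t))$ converges almost surely to a finite limit by the martingale convergence theorem, while Dynkin's formula together with $\mathcal{L}V\le 0$ and $V\ge 0$ yields
\begin{align}
\mathbb{E}\!\int_0^{\infty} \big|(\mathcal{L}V)(x(s))\big|\, ds \le V(x_0) < \infty.
\end{align}
Because $(\mathcal{L}V)$ is negative definite, it is bounded away from zero on every annulus $\{a\le|x|\le b\}$ with $0<a<b$; finiteness of the above integral therefore prevents the trajectory from lingering in any such annulus, and combined with the supermartingale limit this pins the limit of $V(x(t))$, and hence of $|x(t)|$, to zero almost surely. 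Here properness (radial unboundedness) of $V$ is what rules out escape to infinity and upgrades the conclusion from a neighborhood of the origin to all of $\R^n$, giving global asymptotic stability in probability for every $x_0\in\R^n$.

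The hard part will be the third step: rigorously excluding the scenario in which the sample path wanders persistently through a region bounded away from the origin while $V(x(t))$ still converges. This is the stochastic analogue of a LaSalle-type invariance argument, and it must be carried out by patching together the strict negativity of $\mathcal{L}V$ on compact annuli with the recurrence behaviour of the diffusion, rather than by any direct pointwise estimate. The remaining steps are, by comparison, routine consequences of It\^o calculus and supermartingale inequalities.
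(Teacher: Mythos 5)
The paper does not actually prove this statement: it is imported verbatim as a classical result of Hasminskii (the cited monograph), so there is no in-paper argument to compare against. Your sketch is a faithful reconstruction of the standard proof and the overall architecture (It\^o formula, nonnegative supermartingale property of $V(x(t))$, maximal inequality for stability in probability, Dynkin's formula plus negative definiteness of $\mathcal{L}V$ for almost-sure convergence) is the right one. Two remarks. First, you silently use that $V$ is positive definite with $V(0)=0$; the theorem as transcribed in the paper only says ``proper,'' but positive definiteness is indeed part of Hasminskii's hypotheses (compare the paper's Remark on Kushner's criterion, which states it explicitly), so you are filling a gap in the statement rather than committing an error --- it would be worth flagging that without $\inf_{|x|=\epsilon}V>0$ the maximal-inequality step collapses. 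Second, the step you identify as ``the hard part'' is easier than you suggest: no LaSalle-type invariance or recurrence analysis is needed. Since $V(x(t))$ converges a.s.\ to some $V_\infty\ge 0$ and $V$ is proper, on the event $\{V_\infty>\delta\}$ the path is eventually confined to the compact set $\{\delta\le V(x)\le \sup_t V(x(t))\}$, on which $|(\mathcal{L}V)(x)|$ is bounded below by a positive constant; this makes $\int_0^\infty |(\mathcal{L}V)(x(s))|\,ds$ infinite on that event, contradicting the a.s.\ finiteness you derived from
\begin{align}
\mathbb{E}\int_0^{\infty}\big|(\mathcal{L}V)(x(s))\big|\,ds\le V(x_0)<\infty .
\end{align}
Hence $\pr\{V_\infty>\delta\}=0$ for every $\delta>0$, and positive definiteness plus properness converts $V(x(t))\to 0$ into $|x(t)|\to 0$ a.s.\ for every $x_0\in\R^n$. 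With those two points tightened, your argument is complete and consistent with the source the paper cites.
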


\tref{the:florchinger} implies that if there exists a stochastic control Lyapunov function $V: \R^n \rightarrow \R$ of the system \eqref{eq:sys-sto-ctrl}, and if $V(x)$ satisfies the small control property, then there exists a continuous feedback law which makes the origin of the system \eqref{eq:sys-sto-ctrl} locally asymptotically stable in probability. Moreover, \tref{the:hasminskii} yields that the resulting closed-loop system is globally asymptotically stable in probability if the stochastic control Lyapunov function $V(x)$ is proper in $\R^n$.

\begin{remark}\label{rem:kushner}
Because the deterministic systems are considered in this paper, the paths of the stochastic systems formed \eqref{eq:sys-sto} should converge to the origin with probability one. Although \dref{def:lasip} does not clarify the probabilities that the paths converge to the origin, the other definition by Kushner \cite{kushner} claims that if the function $V: \R^n \rightarrow \R$ is proper and positive definite in $Q_m:= \{x \in \R^n | V(x)<m\}$, and if $(\mathcal{L}V)(x)$ is negative definite in $Q_m$, then the paths starting from $x(0)=x_0 \in Q_m$ converge to the origin at least with probability $1-V(x_0)/m$. The fact yields that if there exists $V(x)$ satisfying the conditions of  \tref{the:hasminskii}, then the paths of the stochastic system \eqref{eq:sys-sto} converge to the origin with probability one. One has to note that this discussion is valid for the systems with Euclidean state spaces. \eot
\end{remark}

\subsection{Wong-Zakai Approximation Theorem}

To randomize the deterministic systems, the Wong-Zakai correction terms \cite{twardowska,wong} should be considered. This subsection shows the Wong-Zakai approximation theorem and discusses the necessity of the correction term.

Let a sequence
\begin{align}
a=t^{(n)}_0 < t^{(n)}_1 < \ldots < t^{(n)}_n = b,\ t \in [a,b] \subset \R,
\end{align}
and a piecewise linear sequence 
\begin{align}\label{eq:wz-wiener}
\hspace{-0.5cm}w^{(n)}(t) := w^{(n)}(t_i)+\frac{w^{(n)}(t_{i+1})-w^{(n)}(t_i)}{t^{(n)}_{i+1}-t^{(n)}_i}(t-t^{(n)}_i)
\end{align}
be considered, where $t^{(n)}_{i+1} \ge t \ge t^{(n)}_i$ and $0 \le i \le n-1$. For a system
\begin{align}\label{eq:wongzakai}
dx^{(n)}(t) = f(x^{(n)}(t),t) dt + \sigma(x^{(n)},t) dw^{(n)}(t),
\end{align}
the following assumptions are considered:
\begin{description}
\item[$(H_1)$] $\sigma'(x,t) := \partial \sigma(x,t)/\partial x$ are continuous in $x$ and $t$.
\item[$(H_2)$] $\sigma$, $\sigma'\sigma$, and $f$ are continuous in $t$.
\item[$(H_3)$] There exist constants $K$ satisfying $|\sigma(x,t)-\sigma(x_0,t)| \le K |x-x_0|$, $|\sigma'(x,t)\sigma(x,t)-\sigma'(x_0,t)\sigma(x_0,t)| \le K |x-x_0|$, and $|f(x,t)-f(x_0,t)| \le K |x-x_0|$.
\end{description}

Then, the following theorem is derived.
\begin{theorem}[Wong and Zakai \cite{wong}]\label{the:wong}
If the system \eqref{eq:wongzakai} satisfies the assumptions $(H_1)$--$(H_3)$, then, the system \eqref{eq:wongzakai} converges in the mean to the system
\begin{align}\label{eq:sys-wong}
dx(t) = &f(x(t),t) dt + \sigma(x(t),t) dw(t) + \frac12 \sigma'(x(t),t) \sigma(x(t),t) dt
\end{align}
as $n \rightarrow 0$.
\eot
\end{theorem}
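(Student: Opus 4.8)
The plan is to exploit the fact that, for each fixed $n$, the piecewise-linear path $w^{(n)}$ is absolutely continuous, so that \eqref{eq:wongzakai} is an \emph{ordinary} (pathwise) differential equation rather than a genuine stochastic one. Writing its solution in integral form and comparing with the It\^o solution of \eqref{eq:sys-wong}, the entire difficulty concentrates in the single term $\int_0^t \sigma(x^{(n)}(s),s)\,\dot w^{(n)}(s)\,ds$, where $\dot w^{(n)}$ is constant on each partition cell $[t_i^{(n)},t_{i+1}^{(n)}]$ and equals the slope $c_i := (w(t_{i+1}^{(n)})-w(t_i^{(n)}))/(t_{i+1}^{(n)}-t_i^{(n)})$. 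I would first record, using $(H_3)$ and a standard Gronwall/Picard argument, that \eqref{eq:wongzakai} admits a unique pathwise solution with uniformly bounded moments, so that all later estimates may be carried out on a fixed compact set with a single Lipschitz constant $K$.

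The heart of the proof is a local, one-cell Taylor expansion. On $[t_i^{(n)},t_{i+1}^{(n)}]$, set $x_i := x^{(n)}(t_i^{(n)})$, $\Delta w_i := w(t_{i+1}^{(n)})-w(t_i^{(n)})$, and $\Delta t_i := t_{i+1}^{(n)}-t_i^{(n)}$. Integrating the ODE over the cell and using $(H_1)$ to expand $\sigma$ about $x_i$, I would establish
\begin{align}
\int_{t_i^{(n)}}^{t_{i+1}^{(n)}} \sigma(x^{(n)}(s),s)\,c_i\,ds = \sigma(x_i,t_i^{(n)})\,\Delta w_i + \tfrac12\,\sigma'(x_i,t_i^{(n)})\sigma(x_i,t_i^{(n)})\,(\Delta w_i)^2 + R_i,
\end{align}
where the leading term is the zeroth-order Taylor coefficient, the correction arises by substituting the first-order local dynamics $x^{(n)}(s)-x_i \approx \sigma(x_i,t_i^{(n)})\,c_i\,(s-t_i^{(n)})$ into the first-order coefficient and integrating $(s-t_i^{(n)})$ over the cell (which yields $c_i^2(\Delta t_i)^2/2 = (\Delta w_i)^2/2$), and $R_i$ collects the drift contribution, the second-order Taylor remainder, and the cross terms.

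Summing over the cells, the first term $\sum_i \sigma(x_i,t_i^{(n)})\,\Delta w_i$ is a nonanticipating Riemann sum converging in the mean to the It\^o integral $\int_0^t \sigma(x(s),s)\,dw(s)$, while the key probabilistic input — the $L^2$ convergence of the quadratic variation of the Wiener process, $\mathbb{E}[(\Delta w_i)^2]=\Delta t_i$ together with $\sum_i(\Delta w_i)^2 \to t$ — converts $\sum_i \tfrac12\,\sigma'(x_i)\sigma(x_i)(\Delta w_i)^2$ into the correction integral $\tfrac12\int_0^t \sigma'(x(s),s)\sigma(x(s),s)\,ds$. A final Gronwall inequality, fed by the Lipschitz bounds of $(H_3)$ on $f$, $\sigma$, and $\sigma'\sigma$, would propagate these local identities into the global estimate $\mathbb{E}\,\sup_{s\le t}|x^{(n)}(s)-x(s)| \to 0$ as $n\to\infty$, which is the asserted convergence in the mean.

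The main obstacle is the control of $\sum_i R_i$ and, more subtly, the separation of scales within each cell. Because $x^{(n)}$ is driven by $\dot w^{(n)}$, its increment over a cell is of order $\Delta w_i \sim \sqrt{\Delta t_i}$, so the naive bound $|R_i|=O(\Delta t_i)$ is not summable; one must instead show that the genuinely second-order pieces are $o(\Delta t_i)$ on average and that the cross terms have vanishing mean after summation, which is precisely where $(H_1)$–$(H_2)$ (continuity of $\sigma'$ and of $\sigma'\sigma$) are invoked to pass the Riemann sums to integrals. Verifying that the feedback of the solution into the noise produces exactly the midpoint (Stratonovich) value — and hence the factor $\tfrac12$ — rather than the left-endpoint (It\^o) value is the conceptual crux that distinguishes \eqref{eq:sys-wong} from the uncorrected equation.
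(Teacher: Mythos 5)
The paper offers no proof of this statement: it is quoted as a classical result from Wong and Zakai \cite{wong} (note the typo ``as $n\rightarrow 0$'', which should read ``as $n\rightarrow\infty$'', i.e.\ as the mesh of the partition tends to zero), so there is no in-paper argument to measure yours against. Judged on its own, your outline is the standard discretization proof and it correctly isolates the mechanism that produces the correction: on each cell the solution increment feeds back into $\sigma$, and integrating $(s-t_i^{(n)})$ against the squared slope turns $c_i^2(\Delta t_i)^2/2$ into $(\Delta w_i)^2/2$, whose sum converges in $L^2$ to $t/2$ by the quadratic-variation property, yielding $\tfrac12\int_0^t\sigma'\sigma\,ds$. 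That is the right skeleton.

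Two substantive caveats. First, the step ``$\sum_i\sigma(x_i,t_i^{(n)})\Delta w_i$ converges in the mean to $\int_0^t\sigma(x(s),s)\,dw(s)$'' is circular as stated: the Riemann sum is built from $x^{(n)}$, not from $x$, so its convergence to the It\^o integral of the limit is only available after the estimate $\mathbb{E}\sup_{s\le t}|x^{(n)}(s)-x(s)|^2\to0$ has been closed. The argument must be organized as a single Gronwall loop in which the difference of the two stochastic terms is controlled by the It\^o isometry together with $(H_3)$, not as ``first identify the limits of the sums, then conclude.'' Second, the control of $\mathbb{E}\bigl|\sum_iR_i\bigr|^2$, which you rightly call the main obstacle, is named but not executed, and it hides a real difficulty: within a cell $x^{(n)}$ is driven by the slope $c_i$, which depends on $w(t_{i+1}^{(n)})$ and is therefore not $\mathcal{F}_{t_i^{(n)}}$-measurable, so the cross terms are not martingale increments and their cancellation in the mean requires an explicit conditioning argument rather than a generic ``vanishing mean after summation.'' Finally, for context: the proof in the cited source itself takes a different route --- Wong and Zakai work in one dimension with $\sigma$ bounded away from zero, apply the transformation $h(x,t)=\int^x du/\sigma(u,t)$ to make the noise additive (so that convergence of $w^{(n)}$ to $w$ transfers immediately), and recover the $\tfrac12\sigma'\sigma$ term via It\^o's formula on the inverse map; your discretization argument is the more general modern one (as in Ikeda and Watanabe \cite{ikeda}) and does not require $\sigma\neq0$.
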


\tref{the:wong} implies that if the Wiener process $w(t)$ is applied, then the third term of the right-hand side of equation \eqref{eq:sys-wong} is generated. In other words, if the Wiener process $w(t)$ is applied, then the stochastic integrals in $w(t)$ should be defined by Stratonovich integrals \cite{twardowska}.

\begin{remark}
In other randomizing problems, there are cases that Ito integrals are valid for stochastic integrals \cite{oksendal}. Nevertheless, Stratonovich integrals are considered reasonable for this study. The reason is that the sequence \eqref{eq:wz-wiener} is used as an approximation of the Wiener process, because the strict white noises are impossible to generate in practice. Moreover, in the randomization problems, Ito and Stratonovich integrals provide different results; for example, let a one-dimensional system
\begin{align}\label{eq:sys-wong2}
\dot{x}(t) = x \dot{\xi}(t),\quad x(t) \in \R,
\end{align}
be considered, where a stochastic noise $\xi(t) \in \R$ is once differentiable in $t$. If Ito integrals are employed, $dx(t) = x(t) dw(t)$ is obtained replacing $\xi(t)$ by $w(t)$; the solution to this equation is $x(t) = x(0) e^{-t/2 + w(t)}$. However, the process is not a solution to \eqref{eq:sys-wong2} if $w(t)$ is re-replaced by $\xi(t)$. In contrast, if Stratonovich integrals are employed, one obtains $dx(t) = x(t) \circ dw(t) = x(t)dw(t) + (1/2) x(t) dt$. The solution to this equation is $x(t) = x(0) e^{w(t)}$, which is also the solution to \eqref{eq:sys-wong2}, if $w(t)$ is re-replaced by $\xi(t)$. That explains why Stratonovich integrals are employed in this paper. \eot
\end{remark}

\section{Stochastic Stabilization for Deterministic Systems}\label{sec:main}

This section presents the main results of this paper. In the next subsection, a sufficient condition is derived so that the origin of a deterministic system
\begin{align}\label{eq:the-det}
\dot{x}(t) = f(x(t)) + g(x(t)) u(t)
\end{align}
is locally asymptotically stabilizable in probability by means of a feedback law, including a Gaussian white noise. Moreover in \ssref{subsec:brockett}, a continuous stochastic feedback stabilizer is proposed for the Brockett integrator \eqref{eq:brockett}.

\subsection{Sufficient Conditon for Stochastic Stabilizers}\label{subsec:general}

In randomizing problems, the diffusion coefficients of the Wiener processes play a crucial role for stochastic stabilities. The design concepts of the diffusion coefficients and the stochastic control Lyapunov functions are considered by using the following theorem.

\begin{theorem}\label{the:general}
Let deterministic system \eqref{eq:the-det} and a control input
\begin{align}\label{eq:the-ctrl}
u(t) dt = v(t) dt + B(x(t))\circ dw(t)
\end{align}
be considered, where $v(t) \in \R^m$ is a new input vector and $B:\R^n \rightarrow \R^m$ is (a part of) the diffusion coefficient. If $B(x)$ is once differentiable, $B(0)=0$, and there exists a twice differentiable positive definite function $V: \R^n \rightarrow \R$ such that
\begin{align}\label{eq:the-sclf}
\frac12 B(x)^T g(x)^T &\left[ \pfrac{}{x}\left[\pfrac{V(x)}{x}\right]^T \right] g(x) B(x) \nonumber \\
&+ \frac12\pfrac{V(x)}{x} \pfrac{g(x) B(x)}{x} g(x) B(x)< -\lie{f}{V}(x)
\end{align}
for all $x \in \{ x \in \R^n \backslash \{0\} | \lie{g}{V}(x)=0 \}$, then $V(x)$ is a stochastic control Lyapunov function of the system \eqref{eq:the-det} with \eqref{eq:the-ctrl}. \eot
\end{theorem}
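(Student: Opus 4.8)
The plan is to substitute the randomized control input \eqref{eq:the-ctrl} into the deterministic system \eqref{eq:the-det}, obtain a Stratonovich stochastic differential equation, convert it to It\^o form via the Wong-Zakai correction of \tref{the:wong}, and then verify the stochastic control Lyapunov condition of \dref{def:sclf} by direct inspection of the resulting infinitesimal generator. Inserting \eqref{eq:the-ctrl} into \eqref{eq:the-det} gives
\begin{align}
dx(t) = \{ f(x) + g(x) v \} dt + g(x) B(x) \circ dw,
\end{align}
a Stratonovich equation whose diffusion coefficient is $\sigma(x) := g(x) B(x)$. Because $B(0) = 0$, we have $\sigma(0) = g(0)B(0) = 0$, so the origin remains an equilibrium and the standing hypotheses of \sref{sec:preliminary} are respected.

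Next I would invoke \tref{the:wong} to pass from the Stratonovich equation to its It\^o counterpart. This introduces the Wong-Zakai correction term $\frac12 \sigma'(x)\sigma(x) = \frac12 \pfrac{g(x)B(x)}{x} g(x)B(x)$ into the drift, producing
\begin{align}
dx = \Big\{ f(x) + g(x) v + \tfrac12 \pfrac{g(x)B(x)}{x} g(x)B(x) \Big\} dt + g(x)B(x) \, dw.
\end{align}
This is precisely of the form \eqref{eq:sys-sto-ctrl}, with new input $v$, control matrix $g$, diffusion $\sigma = gB$, and drift $f + \frac12 \pfrac{g(x)B(x)}{x} g(x)B(x)$.

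I would then write out the infinitesimal generator $\mathcal{L}V$ for this It\^o system from the definition given for \eqref{eq:sys-sto-ctrl}. The drift contributes $\lie{f}{V} + \lie{g}{V}\, v + \frac12 \pfrac{V(x)}{x} \pfrac{g(x)B(x)}{x} g(x)B(x)$, while the second-order part contributes the Hessian quadratic form $\frac12 B^T g^T \left[ \pfrac{}{x}\left[\pfrac{V(x)}{x}\right]^T \right] g B$. The key observation is that on the set $\{ x \in \R^n \backslash \{0\} \mid \lie{g}{V}(x) = 0 \}$ the input-dependent term $\lie{g}{V}\, v$ vanishes identically, so $\mathcal{L}V$ becomes independent of $v$ there; moving $\lie{f}{V}$ to the right-hand side, the requirement $\mathcal{L}V(x) < 0$ on that set is then precisely the hypothesis \eqref{eq:the-sclf}. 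Together with the assumed positive-definiteness (hence properness) and twice differentiability of $V$, this is exactly \dref{def:sclf}, so $V$ is a stochastic control Lyapunov function.

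The algebra --- expanding the Lie derivatives and the Hessian bilinear form --- is routine. The step that needs the most care is the Stratonovich-to-It\^o conversion, because it is the correction term $\frac12 \pfrac{g(x)B(x)}{x} g(x)B(x)$, and its reappearance inside $\mathcal{L}V$, that produces the second summand on the left-hand side of \eqref{eq:the-sclf}. I would take care to differentiate the full composite $g(x)B(x)$ rather than $B$ alone, and to use the scalar-Wiener-process form of the correction --- consistent with the one-dimensional $w(t)$ fixed throughout the paper --- rather than a multidimensional version.
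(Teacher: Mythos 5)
Your proposal is correct and follows essentially the same route as the paper's proof: substitute the randomized input to obtain a Stratonovich equation, add the Wong--Zakai correction term $\frac12 \pfrac{g(x)B(x)}{x}g(x)B(x)$ to pass to It\^o form, and then read off from the infinitesimal generator that condition \eqref{eq:the-sclf} is exactly negativity of $\mathcal{L}V$ on $\{\lie{g}{V}=0\}\backslash\{0\}$, where the $v$-dependent term drops out. The paper states this in two sentences; you have simply filled in the intermediate steps (and your ordering $g(x)B(x)$ in the correction term matches the theorem statement, whereas the paper's displayed drift writes $B(x)g(x)$, evidently a typographical slip).
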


\begin{proof}
Randomizing the system \eqref{eq:the-det} by using \eqref{eq:the-ctrl}, the following stochastic system
\begin{align}\label{eq:the-sto}
\hspace{-0.5cm}dx = \left\{ g(x)v + f(x)+\frac12 \pfrac{B(x) g(x)}{x} B(x) g(x)  \right\} dt + g(x)B(x) dw
\end{align}
is obtained. By \dref{def:sclf}, \eqref{eq:the-sclf}, $B(0)=0$, and $B(x) \in C^1$, the theorem is derived.
\end{proof}

\tref{the:general} is almost trivial if \dref{def:sclf} is considered; however, the theorem clarifies that the deterministic system \eqref{eq:the-det} may be asymptotically stabilized in probability using suitably-designed diffusion coefficient $B(x)$ and stochastic control Lyapunov function $V(x)$. The design concepts of $B(x)$ and $V(x)$ are considered as follows. If $V(x)$ is designed so that the eigenvalues of 
\begin{align}
g^T(x) \left[\pfrac{}{x}\left[\pfrac{V(x)}{x}\right]^T\right] g(x)
\end{align}
are negative while $\lie{g}{V}(x) = 0$, then the first term of the left-hand side of \eqref{eq:the-sclf} is negative. To satisfy this condition, $V(x)$ should be concave down in $\lie{g}{V}(x)=0$. Further, $B(x)$ should be designed so that the sufficient condition \eqref{eq:the-sclf} is satisfied; e.g., if $\lie{f}{V}(x)=0$ while $\lie{g}{V}(x)=0$, then $B(x)$ is designed so that the second term of the left-hand side of \eqref{eq:the-sclf} is negative.

In addition, if a stochastic control Lyapunov function satisfies the small control property, there is a continuous stochastic feedback law. The next subsection confirms that the design concepts can be used effectively by obtaining a continuous stochastic feedback stabilizer for the Brockett Integrator \eqref{eq:brockett}.

\subsection{Case Study: Continuous Stochastic Stabilizer for Brockett Integrator} \label{subsec:brockett}

This subsection is a continuation of the discussion of \ssref{subsec:idea}. First, in \tref{the:brockett}, one demonstrates the existence of the diffusion coefficients such that $V_2(x)$ becomes a stochastic control Lyapunov function for the Brockett integrator \eqref{eq:brockett}. Second, in \tref{the:sontag}, a Sontag-type feedback controller is derived using the stochastic control Lyapunov function $V_2(x)$. Third, in \tref{the:sscp}, a sufficient condition is obtained such that the Sontag-type controller is continuous for all $x \in \R^3$. Finally, in \corref{cor:global}, it is proved that the proposed stabilizer makes the origin of the resulting closed-loop system globally asymptotically stable in probability.

\begin{theorem}\label{the:brockett}
Let the Brockett Integrator \eqref{eq:brockett} and a control input
\begin{align}\label{eq:brockett2}
\hspace{-0.5cm} u_c(t) dt= \{ u(t) + v(x(t)) \}dt + B(x(t)) \circ dw(t),
\end{align}
be considered, where
\begin{align}
\label{eq:brockett3}v(x) &:= \begin{bmatrix}
-\dfrac{1}{2b_1} \pfrac{\sigma_1}{x}(x) \sigma(x) \\
-\dfrac{1}{2b_2} \pfrac{\sigma_2}{x}(x) \sigma(x)
\end{bmatrix},\\
\label{eq:brockett4}\sigma(x) &:= \begin{bmatrix}
\sigma_1(x) & \sigma_2(x) & \sigma_3(x)
\end{bmatrix}^T = g(x) B(x),\\
\label{eq:brockett5}B(x) &:= \begin{bmatrix}
B_1(x) & B_2(x)
\end{bmatrix}^T.
\end{align}
If the diffusion coefficient $B(x)$ is once differentiable in $x$ and satisfies
\begin{align}
\label{eq:brockett6}B_1(0)&=B_2(0)=0, \\
\label{eq:brockett7}B_1(x)&B_2(x)(b_1b_4-b_2b_3)x_3 \ge 0,\\
\label{eq:brockett8}B_1(x) &\neq 0,\  B_2(x) \neq 0,\  x \in M \backslash\{0\},
\end{align}
then $V_2(x)$ is a stochastic control Lyapunov function of the system \eqref{eq:brockett} with \eqref{eq:brockett2}--\eqref{eq:brockett8}. \eot
\end{theorem}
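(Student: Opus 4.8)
The plan is to invoke \tref{the:general} with the driftless vector field $f \equiv 0$ (the Brockett integrator carries no drift), the input matrix $g$ of \eqref{eq:brockett}, the diffusion $B$, and the candidate $V = V_2$. Here the ``new input'' of \tref{the:general} is played by $u + v$; but condition \eqref{eq:the-sclf} is tested only on the degenerate set $\{\,x : \lie{g}{V_2}(x) = 0\,\}$, where every input contribution $\pfrac{V_2}{x}g(x)(u+v)$ is annihilated, so the particular shape of $v$ in \eqref{eq:brockett3} is irrelevant to the stochastic-control-Lyapunov property itself (it matters only for the drift shaping used later), and one is left to sign the purely diffusion-generated terms. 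With $f\equiv 0$ the right-hand side $-\lie{f}{V_2}(x)$ of \eqref{eq:the-sclf} vanishes, so, writing $\sigma(x)=g(x)B(x)$ as in \eqref{eq:brockett4}, it suffices to establish
\begin{align}\label{eq:plan-key}
\frac12 \sigma(x)^T \left[\pfrac{}{x}\left[\pfrac{V_2(x)}{x}\right]^T\right] \sigma(x) + \frac12 \pfrac{V_2(x)}{x}\pfrac{\sigma(x)}{x}\sigma(x) < 0
\end{align}
for every nonzero $x$ with $\lie{g}{V_2}(x)=0$. Along the way I would also confirm that $V_2$ is positive definite, proper, and twice differentiable away from the origin, as \dref{def:sclf} requires.

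The first genuine step is to identify $\{\lie{g}{V_2}(x)=0\}$. Differentiating \eqref{eq:sclf-bro}, the gradient has the structure $\pfrac{V_2}{x}=(x_1\phi,\,x_2\phi,\,x_3\psi)$ for scalar functions $\phi,\psi$ of $(X,x_3)$, whence $\lie{g_1}{V_2}=b_1x_1\phi+b_3x_2x_3\psi$ and $\lie{g_2}{V_2}=b_2x_2\phi-b_4x_1x_3\psi$. I would then argue that the simultaneous vanishing of these two expressions forces $x\in M=\{x_1=x_2=0\}$: on the plane $x_3=0$ one has $\phi\equiv 1\neq 0$, so $\lie{g}{V_2}=0$ already gives $x_1=x_2=0$; and for $x_3\neq 0$ the coefficient determinant $-(b_1b_4x_1^2+b_2b_3x_2^2)$ together with the sign of $\psi$ on $\{\phi=0\}$ excludes solutions off $M$. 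Hence it is enough to verify \eqref{eq:plan-key} on $M\setminus\{0\}$.

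On $M$ the computation collapses. There $\sigma=gB=(b_1B_1,\,b_2B_2,\,0)^T$, the Hessian of $V_2$ reduces to the diagonal matrix $\mathrm{diag}(-(1+x_3^2),-(1+x_3^2),4)$ already recorded in \ssref{subsec:idea}, and $\pfrac{V_2}{x}=(0,0,4x_3)$. Substituting, the first term of \eqref{eq:plan-key} becomes $-\frac12(1+x_3^2)\bigl[(b_1B_1)^2+(b_2B_2)^2\bigr]$, which by \eqref{eq:brockett8} and $b_1,b_2\neq 0$ is strictly negative on $M\setminus\{0\}$. Evaluating $\pfrac{\sigma}{x}$ at points of $M$, the second (Wong--Zakai) term reduces to $-2(b_1b_4-b_2b_3)B_1B_2x_3$, which by the sign hypothesis \eqref{eq:brockett7} is nonpositive. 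Their sum is therefore strictly negative, which is exactly \eqref{eq:plan-key}; \tref{the:general} then yields that $V_2$ is a stochastic control Lyapunov function for \eqref{eq:brockett} with \eqref{eq:brockett2}--\eqref{eq:brockett8}.

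I expect the main obstacle to be the second step rather than the third: the algebra on $M$ is short and transparent, but rigorously proving that $\{\lie{g}{V_2}(x)=0\}$ contains no point outside $M$ is delicate, since it requires controlling the transcendental functions $\phi$ and $\psi$ (in particular showing $\psi>0$ whenever $\phi=0$) and, when $b_1b_4$ and $b_2b_3$ have opposite signs, analyzing the cone on which the coefficient determinant degenerates. A secondary technical point is the twice-differentiability and positive definiteness of $V_2$ near the origin, where the variable-exponent term $|X/2|^{1+x_3^2/2}$ must be handled carefully; these, however, bear on the hypotheses of \dref{def:sclf} rather than on the central inequality.
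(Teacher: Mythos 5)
Your proposal is correct and its computational core is the same as the paper's: both arguments reduce to evaluating the generator on $M$, where $\sigma=(b_1B_1,b_2B_2,0)^T$, the Hessian term becomes $-\tfrac12(1+x_3^2)\bigl[(b_1B_1)^2+(b_2B_2)^2\bigr]<0$ by \eqref{eq:brockett8}, and the remaining term becomes $-2(b_1b_4-b_2b_3)B_1B_2x_3\le 0$ by \eqref{eq:brockett7}. The paper reaches these two terms by first forming the randomized system \eqref{eq:bro-ito}, in which $v$ cancels the first two components of the Wong--Zakai correction so that the residual drift is $f=(0,0,-\tfrac12(b_1b_4-b_2b_3)B_1B_2)^T$, and then computing $\lie{f}{V_2}$ on $M$; your observation that $v$ contributes nothing on $\{\lie{g}{V_2}=0\}$ because it enters through $g$ is the same fact viewed from the other side, so the two routes coincide term by term (your signs and coefficients are in fact the consistent ones). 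The one genuine divergence is your second step: you insist on first proving $\{\lie{g}{V_2}(x)=0\}\subset M\cup\{0\}$, whereas the paper verifies negativity only on $M$ and leaves the identification of the degenerate set entirely implicit, even though \dref{def:sclf} tests $(\mathcal{L}V)$ precisely on that set. Your sketch of this step is the right strategy ($\phi\equiv 1$ on $\{x_3=0\}$ forces $x_1=x_2=0$ there; when $\phi=0$ one has $X<2$, hence $\psi\ge 4-X-2\sup_{0<t<1}t|\log t|>0$, so $\phi$ and $x_3\psi$ cannot vanish simultaneously off $M$), but, as you yourself note, it is not yet closed when $b_1b_4$ and $b_2b_3$ have opposite signs, since the determinant $-(b_1b_4x_1^2+b_2b_3x_2^2)$ then degenerates on a cone and one must exclude $(\phi,x_3\psi)$ lying in the kernel there. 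Completing that case (and the regularity of $V_2$ at $X=0$, where the variable-exponent term makes the Hessian discontinuous) would go beyond what the paper itself establishes rather than merely reproduce it.
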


\begin{proof}
Randomizing the Brockett integrator \eqref{eq:brockett} by using \eqref{eq:brockett2}--\eqref{eq:brockett8}, the stochastic system
\begin{subequations}\label{eq:bro-ito}
\begin{align}
\label{eq:bro-ito1}&dx = \{ f(x) + g(x) u \}dt + g(x) B(x) dw
\end{align}
with
\begin{align}
\label{eq:bro-ito2}&f(x) = \left( 0,0,-\frac12 (b_1 b_4 - b_2 b_3)B_1(x)B_2(x) \right)^T,
\end{align}
\end{subequations}
is derived. Therefore,
\begin{align}
\label{eq:bro-diff}&(LV_2)(x) = \lie{f}{V_2}(x) + \frac12 B(x)^T H(x) B(x)
\end{align}
and
\begin{align}
&H(x) := g^T(x) \left[\pfrac{}{x}\left[\pfrac{V_2(x)}{x}\right]^T\right] g(x)
\end{align}
become
\begin{align}
(LV_2)(x) = &-2(b_1b_4-b_2b_3)B_1(x)B_2(x)x_3(x) + 2B(x)^T H(x) B(x) 
\end{align}
and
\begin{align}
H(x)=&\begin{bmatrix}
-b_1 (1+x_3^2) & 0 \\ 0 & -b_2(1+x_3^2)
\end{bmatrix}
\end{align}
in $x \in M$. If $B(x)$ is so designed such that \eqref{eq:brockett5}--\eqref{eq:brockett8} hold, then
\begin{align}
(LV_2)(x) < 0,\ x \in M
\end{align}
is obtained. Therefore, $V_2(x)$ is a stochastic control Lyapunov function of the stochastic system \eqref{eq:bro-ito}.
\end{proof}

In \tref{the:brockett}, the pre-feedback law $v(x)$ is so constructed as to make the Wong-Zakai correction terms for $dx_1$ and $dx_2$ vanish; i.e., $v(x)$ is made to simplify the design problem of the diffusion coefficient $B(x)$. \tref{the:brockett} converts the stochastic stabilization problem of the Brockett integrator \eqref{eq:brockett} into the construction problem of a stochastic stabilizing feedback law for the system \eqref{eq:bro-ito} via the stochastic control Lyapunov function $V_2(x)$. The following theorem is immediately obtained.

\begin{theorem}\label{the:sontag}
A Sontag-type controller \cite{sontag}
\begin{align}
\label{eq:sontag} u = u_s(x) :=\left\{ \begin{matrix} -\frac{F(x)+\sqrt{F(x)^2 + G(x) ^2}}{G(x)}\lie{g}{V_2}^T(x), &  G(x) \neq 0 \\
0, & G(x)=0
\end{matrix}\right.
\end{align}
with
\begin{align}
&G(x) := \lie{g}{V_2}(x) \lie{g}{V_2}(x)^T, \\
&F(x) := (LV_2)(x)
\end{align}
makes the origin of the system \eqref{eq:bro-ito} locally asymptotically stable in probability. \eot
\end{theorem}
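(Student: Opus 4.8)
The plan is to substitute the Sontag-type controller \eqref{eq:sontag} into the infinitesimal generator of the closed loop, show that the result is negative definite off the origin, and then appeal to the stochastic Lyapunov theory recalled in \tref{the:florchinger}. As the first step I would record the generator of \eqref{eq:bro-ito} acting on $V_2$ under an arbitrary control $u$: since the drift of \eqref{eq:bro-ito} is $f(x)+g(x)u$ and the diffusion is $g(x)B(x)$, this generator is
\[
(\mathcal{L}V_2)(x) = \lie{f}{V_2}(x) + \tfrac12 B(x)^T H(x) B(x) + \lie{g}{V_2}(x)\,u = F(x) + \lie{g}{V_2}(x)\,u,
\]
where $F(x)=(LV_2)(x)$ and $G(x)=\lie{g}{V_2}(x)\lie{g}{V_2}(x)^T$ are precisely the quantities appearing in \eqref{eq:sontag}.

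The core is then a two-case computation. On the open set $\{G(x)\neq 0\}$ I would substitute $u=u_s(x)$ and use $\lie{g}{V_2}(x)\lie{g}{V_2}^T(x)=G(x)$; the control term telescopes and
\[
(\mathcal{L}V_2)(x) = F(x) - \bigl(F(x)+\sqrt{F(x)^2+G(x)^2}\bigr) = -\sqrt{F(x)^2+G(x)^2} < 0,
\]
because $G(x)>0$ there. On the complement $\{G(x)=0\}=\{\lie{g}{V_2}(x)=0\}$ the controller is $u_s(x)=0$, the control term drops out, and $(\mathcal{L}V_2)(x)=F(x)=(LV_2)(x)$, which is negative for every $x\neq 0$ on that set by the stochastic-control-Lyapunov property of $V_2$ established in \tref{the:brockett}. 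Combining the two cases yields $(\mathcal{L}V_2)(x)<0$ for all $x\neq 0$ in a neighborhood of the origin.

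Finally I would invoke the stochastic Lyapunov machinery: $V_2$ is twice differentiable, positive definite, and proper, the feedback $u_s$ is smooth on $\{G\neq 0\}$, and the closed-loop generator is negative definite off the origin, so the hypotheses behind \tref{the:florchinger}(1) (equivalently the local Hasminskii criterion underlying it) are met and the origin of \eqref{eq:bro-ito} is locally asymptotically stable in probability.

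The step I expect to be the main obstacle is the boundary case $\{\lie{g}{V_2}(x)=0\}$. \tref{the:brockett} verifies $(LV_2)(x)<0$ explicitly only on $M=\{x_1=x_2=0\}$, so to close the argument cleanly I must check that, away from the origin, the zero set of $\lie{g}{V_2}$ is contained in $M$ (or otherwise verify negativity of $F$ directly there). A secondary technical point is well-posedness of the closed-loop SDE: $u_s$ need not be continuous where $G$ vanishes with $x\neq 0$, so existence and uniqueness of the solution process, required by the stability definitions, must still be argued. This is exactly the continuity defect that \tref{the:sscp} is set up to remove, so at this stage I would rely on the smoothness of $u_s$ on $\{G\neq 0\}$ together with the SCLF structure rather than on continuity across the whole state space.
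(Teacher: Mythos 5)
Your proposal follows essentially the same route as the paper: split on whether $\lie{g}{V_2}(x)$ vanishes, use \tref{the:brockett} on that zero set, substitute the Sontag formula elsewhere, and invoke the stochastic Lyapunov theorems. Your simplification $(\mathcal{L}V_2)(x)=-\sqrt{F(x)^2+G(x)^2}$ is in fact the correct result of the substitution (the paper writes $-\{F(x)+\sqrt{F(x)^2+G(x)^2}\}$, which is also negative when $G\neq 0$ but is not what the algebra gives), and the boundary-case concern you flag --- whether $\{\lie{g}{V_2}=0\}$ can exceed $M$ away from the origin --- is a real subtlety that the paper's own proof also leaves unaddressed.
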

\begin{proof}
\tref{the:brockett} yields that $(\mathcal{L}V_2)(x)<0$ while $x \in M \backslash \{0\}$. If $x \notin M \backslash \{0\}$,
\begin{align}
(\mathcal{L}V_2)(x) &= -\left\{ F(x)+\sqrt{F(x)^2 + G(x)^2}) \right\} < 0
\end{align}
is obtained by substituting $u=u_s(x)$ with $(\mathcal{L}V_2)(x)$. Therefore, the Sontag-type controller $u=u_s(x)$ locally asymptotically stabilizes the origin of the system \eqref{eq:bro-ito}.
\end{proof}

Then, the following theorem is obtained by using the Sontag-type controller \eqref{eq:sontag}.

\begin{theorem}\label{the:sscp}
Considering \tref{the:brockett}, if the diffusion coefficient $B(x)$ satisfies
\begin{align}
\label{eq:continuous1}&\lim_{x_3 \rightarrow 0} B_1(x) B_2(x) x_3 = 0, \\
\label{eq:continuous2}&\lim_{x \rightarrow 0} \frac{b_1^2 B_1(x)^2 + b_2^2 B_2(x)^2}{b_1^2 x_1^2 + b_2^2 x_2^2}=0,
\end{align}
then the stochastic control Lyapunov function $V_2(x)$ satisfies the small control property. \eot
\end{theorem}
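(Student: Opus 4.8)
The plan is to verify the two requirements of the small control property (\dref{def:sscp}) directly through the Sontag-type controller \eqref{eq:sontag}, which by \tref{the:sontag} already renders $(\mathcal{L}V_2)(x)<0$ for every $x\neq0$. Thus the bounded control property (\dref{def:sbcp}) holds with $u=u_s(x)$, and the statement reduces to dominating $u_s$ near the origin by a bounded function $d(x)$ with $\lim_{x\to0}d(x)=0$; equivalently, it suffices to prove $\lim_{x\to0}u_s(x)=0$, after which one may take, for instance, $d(x)=2|u_s(x)|+\min\{|x|,1\}$.

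First I would reduce the vanishing of $u_s$ to a growth comparison. Writing $F(x)=(LV_2)(x)$, $L(x)=\lie{g}{V_2}(x)$, and $G(x)=|L(x)|^2$, the Sontag formula gives $|u_s(x)|=(F+\sqrt{F^2+G^2})/|L|$ on $\{G\neq0\}$. Elementary estimates show $|u_s|\le|L|$ whenever $F\le0$, and $|u_s|^2\le 8F^2/G+2G$ whenever $F>0$. Since $V_2\in C^2$ with gradient vanishing at the origin, $L(x)=\lie{g}{V_2}(x)$ is continuous with $L(0)=0$, so $|L|\to0$; hence the claim is equivalent to
\[
\lim_{x\to0,\ F(x)>0}\frac{F(x)}{|\lie{g}{V_2}(x)|}=0.
\]

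Next I would decompose $F=\lie{f}{V_2}+\tfrac12 B^{T}HB$ as in \eqref{eq:bro-diff} and bound the pieces separately. Using \eqref{eq:bro-ito2} and the explicit gradient, $\lie{f}{V_2}(x)=-\tfrac12(b_1b_4-b_2b_3)B_1B_2\,(\partial V_2/\partial x_3)$ with $\partial V_2/\partial x_3=x_3\gamma(x)$ and $\gamma(x)\to4$ as $x\to0$, so \eqref{eq:continuous1} gives $\lie{f}{V_2}\to0$. For the diffusion part I would extract the leading behaviour of $H=g^{T}[\partial_x(\partial V_2/\partial x)^{T}]g$: its dominant entries are $b_1^2$ and $b_2^2$ times the common scalar $\beta(x):=-(1+x_3^2)+2(1+x_3^2/2)|X/2|^{x_3^2/2}$, the rest being of higher order, so that $\tfrac12B^{T}HB\approx\tfrac12\beta(b_1^2B_1^2+b_2^2B_2^2)$. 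Since the leading term of the denominator is $|\lie{g}{V_2}|\sim|\beta|\sqrt{b_1^2x_1^2+b_2^2x_2^2}$, the factor $\beta$ cancels and the diffusion contribution to the ratio reduces to $\sqrt{b_1^2x_1^2+b_2^2x_2^2}\cdot(b_1^2B_1^2+b_2^2B_2^2)/(b_1^2x_1^2+b_2^2x_2^2)$, which vanishes by \eqref{eq:continuous2}.

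The hard part is twofold, both difficulties stemming from the term $2|X/2|^{1+x_3^2/2}$ of $V_2$, whose Hessian is singular at the origin: the scalar $\beta(x)$ equals $1$ along $\{x_3=0\}$ but $-(1+x_3^2)$ along $M$, hence vanishes on an intermediate surface, so $|\lie{g}{V_2}|$ admits no single-power lower bound. Moreover, unlike the diffusion term, the drift contribution $\lie{f}{V_2}/|\lie{g}{V_2}|$ retains $\beta$ in its denominator, so \eqref{eq:continuous1} does not obviously control it where $\beta$ is small. To resolve both points I would partition a punctured neighbourhood of the origin into a part near $M$ and its complement. On the complement $\beta$ is bounded away from zero, the estimates above apply, and \eqref{eq:continuous1}--\eqref{eq:continuous2} yield $F/|\lie{g}{V_2}|\to0$; near $M\setminus\{0\}$ I would instead invoke \tref{the:brockett}, which gives $(LV_2)(x)<0$ on $M$, so by continuity $F\le0$ on a neighbourhood and $u_s$ is controlled by the branch $|u_s|\le|\lie{g}{V_2}|\to0$, requiring no ratio estimate. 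Gluing the two regions along a cutoff that shrinks as $x\to0$ is the delicate step; once it is in place, $\lim_{x\to0}u_s(x)=0$ and the small control property follows.
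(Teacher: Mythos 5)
Your overall strategy is the same as the paper's: establish that the Sontag-type feedback $u_s$ is continuous away from the origin and tends to $0$ as $x\to 0$, then conclude the small control property (the paper does this by invoking \tref{the:florchinger} with $k=u_s$; you do it by exhibiting $d(x)$ explicitly, which is fine). Your reduction of $\lim_{x\to 0}u_s(x)=0$ to $\lim F/|\lie{g}{V_2}|=0$ on $\{F>0\}$, via the elementary bounds $|u_s|\le|\lie{g}{V_2}|$ when $F\le 0$ and $|u_s|\le 2F/|\lie{g}{V_2}|+|\lie{g}{V_2}|$ when $F>0$, is correct and is cleaner than anything in the paper. You have also put your finger on a real subtlety that the paper's own proof passes over: the paper only verifies the limit of the drift contribution as $x_3\to 0$ and the limit of the diffusion contribution on the slice $\{x_3=0\}\cap M'$, which does not exhaust all approaches to the origin; the transition region where your scalar $\beta(x)=2\,\partial V_2/\partial X$ changes sign (roughly $x_3^2\log(2/X)\approx\mathrm{const}$) is exactly where the slice-by-slice estimates break down.

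That said, as written your argument is not a proof: the step you yourself call ``the delicate step'' is where all the remaining work lives, and it is not carried out. Two concrete problems must be resolved there. First, the appeal to continuity of $F$ near $M\setminus\{0\}$ gives $F\le 0$ only on a neighbourhood whose width is not controlled near the origin (since $F(0)=0$ and the strict negativity on $M$ degenerates there); without a quantitative lower bound on that width you cannot match it against the region where your complement estimates apply. Second, your denominator asymptotic $|\lie{g}{V_2}|\sim|\beta|\sqrt{b_1^2x_1^2+b_2^2x_2^2}$ omits the contribution $(b_3x_2,-b_4x_1)\,\partial V_2/\partial x_3$, which is the dominant part of $\lie{g}{V_2}$ precisely where $\beta$ is small; the claimed cancellation of $\beta$ in the diffusion ratio therefore does not hold uniformly, and the hypotheses \eqref{eq:continuous1}--\eqref{eq:continuous2} have not been shown to control $F/|\lie{g}{V_2}|$ in the transition region. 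Either complete the partition argument with explicit, matching bounds on both regions (using the $\partial V_2/\partial x_3$ term to bound the denominator from below where $\beta$ is small), or follow the paper and settle for the weaker slice-wise verification plus \tref{the:florchinger}; in its current form the proposal establishes neither.
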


\begin{proof}
Because $B_1(x)$ and $B_2(x)$ are once differentiable, $F(x)$ is continuous for all $x \in \R^3$. Therefore, the Sontag-type controller \eqref{eq:sontag} is continuous except the origin \cite{sontag}. The controller \eqref{eq:sontag} is $0$ in $x \in M$. For $x \in M':=\R^3 \backslash M$, 
\begin{align}
\lie{f}{V_2}(x) = &-2^{-1-x_3^2/2} B_1(x) B_2(x) (b_2 b_3-b_1 b_4) \nonumber \\
	&\times \left\{ 2^{x_3^2/2}(X(x)-4)+X(x)^{1+x_3^2/2}\log(2/X(x)) \right\} x_3 
\end{align}
is obtained. Therefore, \eqref{eq:continuous1} yields that
\begin{align}
\lim_{x_3 \rightarrow 0} \lie{f}{V_2}&(x) = 0.
\end{align}
Further, for $x \in \{x \in \R^3 | x \in M', x_3 = 0\}$,
\begin{align}
G(x) = &b_1^2 x_1^2 + b_2^2 x_2^2 \\
B(x)^T H(x) B(x) = &b_1^2 B_1(x)^2 + b_2^2 B_2(x)^2 \nonumber \\
	&- X(x) (b_4 B_2 x_1 - b_3 B_1(x) x_2)^2 \log(2/X(x)) \nonumber \\
	&- (X(x)-4)(b_4 B_2(x) x_1 - b_3 B_1(x) x_2)^2
\end{align}
is obtained. Therefore, \eqref{eq:continuous2} yields that
\begin{align}
\lim_{X \rightarrow 0} u_s(x) = 0,\ x \in M'.
\end{align}
Then, the conditions of \tref{the:florchinger} are satisfied if $k(x)=u_s(x)$. Therefore, $V_2(x)$ satisfies the small control property. 
\end{proof}

Moreover, the following corollary is derived.

\begin{corollary}\label{cor:global}
The origin of the closed-loop system \eqref{eq:bro-ito} with \eqref{eq:sontag} is globally asymptotically stable in probability. \eot
\end{corollary}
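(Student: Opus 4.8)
The plan is to verify the hypotheses of Hasminskii's theorem (\tref{the:hasminskii}) for the function $V_2$ along the closed loop \eqref{eq:bro-ito} driven by the Sontag-type law \eqref{eq:sontag}. Three facts must be assembled: (i) $V_2$ is twice continuously differentiable and proper on all of $\R^3$; (ii) $V_2$ is positive definite; and (iii) the infinitesimal generator $(\mathcal{L}V_2)(x)$ of the closed loop is negative definite on $\R^3$. Properness and positive definiteness are essentially built into the statement of \tref{the:brockett}, since by \dref{def:sclf} a stochastic control Lyapunov function is proper on its domain (here $D=\R^3$) and $V_2$ was introduced in \ssref{subsec:idea} as a positive definite function; I would only recall these, checking the radial growth of $V_2$ in the two regimes $X(x)\to\infty$ and $|x_3|\to\infty$ (where the term $2|X(x)/2|^{1+x_3^2/2}$ dominates once $X(x)>2$) to confirm properness on the whole space.

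The core of the argument is (iii), obtained by gluing the two regimes already treated. For $x\in M\setminus\{0\}$ the strict inequality $(\mathcal{L}V_2)(x)<0$ is exactly the conclusion of \tref{the:brockett}. For $x\neq 0$ with $G(x)\neq 0$, substituting $u=u_s(x)$ into the generator gives, as in the proof of \tref{the:sontag}, $(\mathcal{L}V_2)(x)=-\{F(x)+\sqrt{F(x)^2+G(x)^2}\}$, which is strictly negative. At any remaining point, where $x\neq 0$ and $\lie{g}{V_2}(x)=0$, the Sontag law sets $u=0$ and the stochastic control Lyapunov property of $V_2$ (\tref{the:brockett} with \dref{def:sclf}) again forces $(\mathcal{L}V_2)(x)<0$. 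Since $(\mathcal{L}V_2)(0)=0$, these cases combine to show that $(\mathcal{L}V_2)$ is negative definite on $\R^3$, and \tref{the:hasminskii} then yields global asymptotic stability in probability of the origin.

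I expect the main obstacle to lie in the smoothness required by (i): \tref{the:hasminskii} demands that $V_2$ be twice \emph{continuously} differentiable, whereas the construction only furnishes twice differentiability, and the term $2|X(x)/2|^{1+x_3^2/2}$ is delicate precisely on $M=\{x_1=x_2=0\}$ where $X(x)=0$. Away from the origin the second derivatives of this term extend continuously to $M$ (the exponent $1+x_3^2/2>1$ annihilates the singular factors) and reproduce the Hessian computed in \ssref{subsec:idea}; the genuinely awkward point is the origin, where the exponent degenerates to $1$ and the second derivatives approached within $M$ need not agree with those approached within $\{x_3=0\}$. I would therefore either restrict \tref{the:hasminskii} to $\R^3\setminus\{0\}$ and patch in local asymptotic stability at the origin via \tref{the:sontag} and the small control property of \tref{the:sscp} (invoking the Kushner formulation recalled after \tref{the:hasminskii} to secure convergence with probability one), or argue that this isolated point does not affect the generator estimate. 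A secondary point to verify carefully is that the set $\{\lie{g}{V_2}(x)=0\}$ coincides with $M$ (or at least that $(\mathcal{L}V_2)<0$ on all of it), so that the gluing in step (iii) genuinely exhausts $\R^3\setminus\{0\}$.
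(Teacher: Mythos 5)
Your proposal follows essentially the same route as the paper: the paper's proof is just the three-sentence observation that $V_2$ is positive definite and proper on $\R^3$ and that $(\mathcal{L}V_2)(x)$ is negative definite for the closed loop, whence \tref{the:hasminskii} applies. The case-by-case gluing of the generator estimate and the regularity/coincidence caveats you raise are reasonable elaborations of details the paper leaves implicit, not a different argument.
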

\begin{proof}
The function $V_2(x)$ is positive definite and proper in $x \in \R^3$. Further, $(\mathcal{L}V_2)(x)$ is negative definite in $x \in \R^3$ for the system \eqref{eq:bro-ito} with \eqref{eq:sontag}. Therefore, the corollary is proved.
\end{proof}


\begin{remark}
\rref{rem:kushner} and \corref{cor:global} yield that any path of the closed-loop system \eqref{eq:bro-ito} with \eqref{eq:sontag} converges to the origin with probability one. \eot
\end{remark}

\begin{remark}
The origin of the closed-loop system \eqref{eq:bro-ito} with \eqref{eq:sontag} is asymptotically stable in probability; however, it is not exponentially stable in probability because $V(x)$ is concave down in $M$. To improve the convergence rate, for example, one can apply the sliding mode controls \cite{bloch}.
\end{remark}

\section{Numerical Simulation}\label{sec:simulation}

In this section, the randomized Brockett integrator \eqref{eq:bro-ito} with $b_1=b_2=1$ and $b_3=b_4=4$ is considered. Because the randomization is operated for stochastic stabilization, the diffusion coefficient $B(x)$ should vanish while the eigenvalues of $H(x)$ are positive. Therefore, $B(x)$ is designed by
\begin{align}
&B_1 (x) = k_1 \lambda_1^2 (x_1^2+x_2^2+x_3^2), \\
&B_2 (x) = k_2 \lambda_2^2 (x_1^2+x_2^2+x_3^2) x_3,
\end{align}
where $\lambda_1,\lambda_2: \R^3 \rightarrow \R$ are the eigenvalues of $H(x)$, and $k_1>0$ and $k_2>0$ the design parameters. Fig.~\ref{fig:bro-lv} confirms that $(\mathcal{L}V_2)(x)$ is negative definite; moreover, Figs.~\ref{fig:ex1-state} and \ref{fig:ex1-input} show that the sample paths of the state $x(t)$ and the input $u(t)$ converge to $0$, respectively. The numerical simulation is calculated with the initial state $x_1(0)=x_2(0)=0$ and $x_3(0)=1$ with the design parameters $k_1=k_2=1.0 \times 10^{-4}$ using Euler-Maruyama scheme \cite{hanson}. 


\begin{figure}[t]
\begin{center}
\includegraphics[width=0.4\textwidth,keepaspectratio=true]{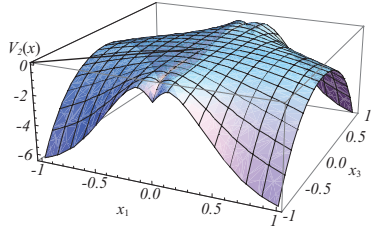}
\end{center}
\caption{$(\mathcal{L}V_2)(x)$ in $x_2=0$ with $k_1=k_2=0.2$. }
\label{fig:bro-lv}
\end{figure}
\begin{figure}[t]
\begin{center}
\includegraphics[width=0.4\textwidth,keepaspectratio=true]{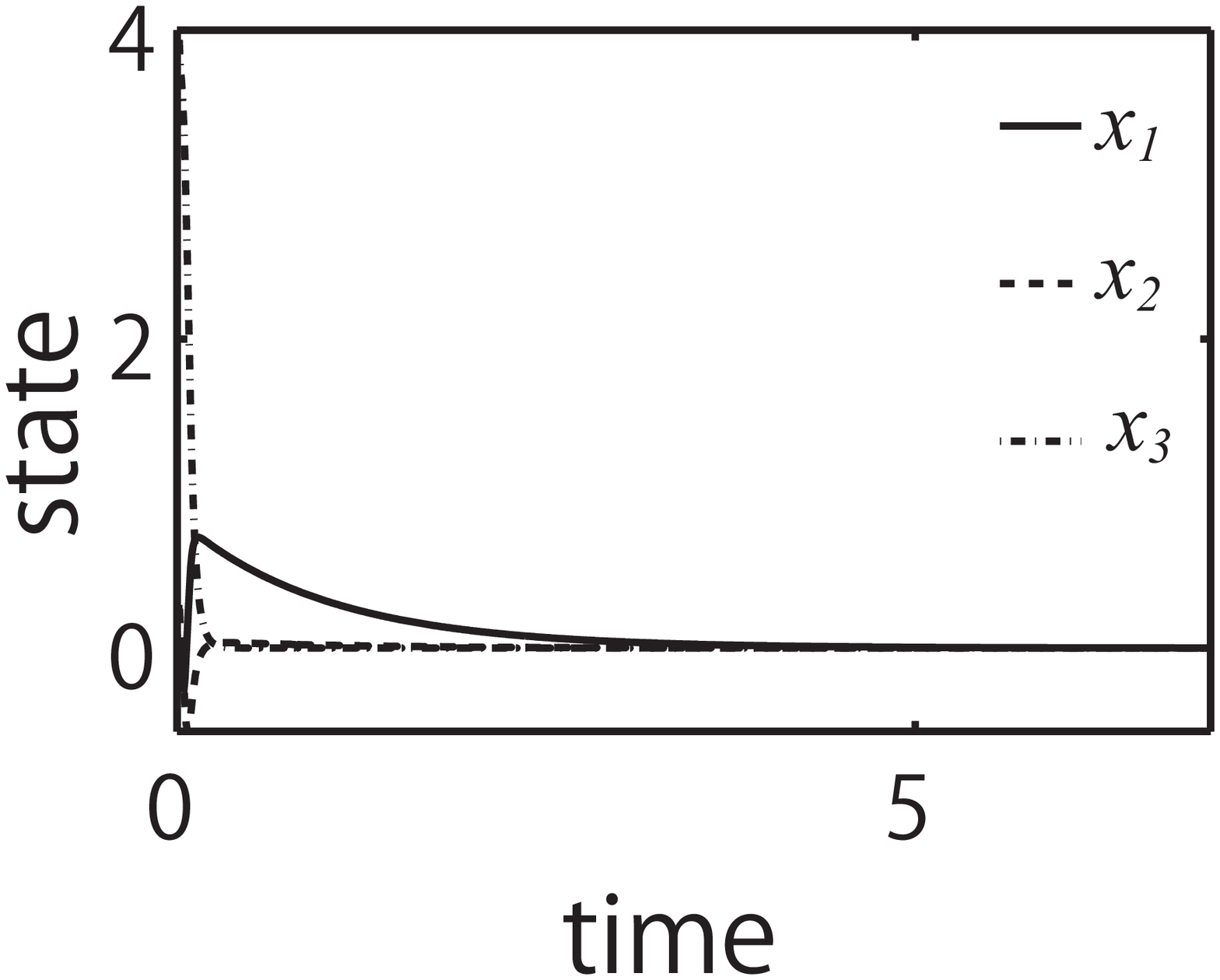}
\end{center}
\caption{Simulation result of state ($b_1=b_2=1$ and $b_3=b_4=4$).}
\label{fig:ex1-state}
\begin{center}
\includegraphics[width=0.4\textwidth,keepaspectratio=true]{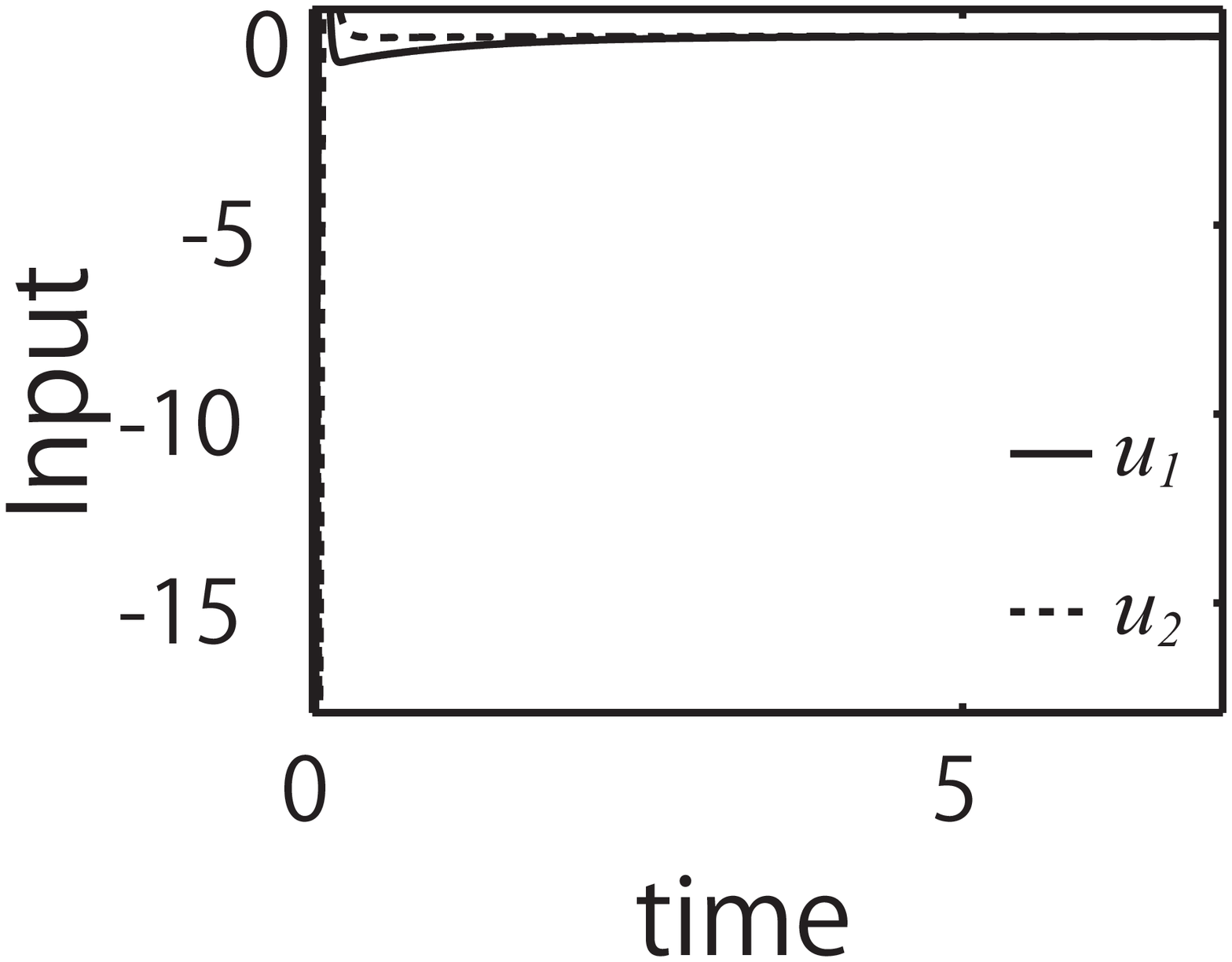}
\end{center}
\caption{Simulation result of input ($b_1=b_2=1$ and $b_3=b_4=4$).}
\label{fig:ex1-input}
\end{figure}


\section{Conclusion}\label{sec:conclusion}

In this paper, sufficient condition is proposed for the diffusion coefficients such that the origin of the input-affine systems becomes locally asymptotically stable in probability. Moreover, the stochastic continuous feedback law, which makes the origin of the Brockett integrator be globally asymptotically stable in probability, is derived.

\section*{Acknowledgments}

The authors thank Professor Pavel Pakshin for his valuable comments.


\end{document}